\DeclareMathAlphabet{\mathcal}{OMS}{cmsy}{m}{n} 
\newtheorem{theorem}{Theorem}
\newtheorem{mytheorem}{Theorem}
\theoremstyle{definition}
\newtheorem{example}{Example}
\newalphalph{\fnsymbolwrap}[wrap]{\@fnsymbol}{}
\begin{document}

\title{On the last question of Stefan Banach}
\author{Pasha Zusmanovich}
\address{
Department of Mathematics, University of Ostrava, Ostrava, Czech Republic
}
\email{pasha.zusmanovich@gmail.com}
\date{last minor revision June 2, 2018}
\thanks{arXiv:1408.2982; Expos. Math. \textbf{34} (2016), N4, 454--466}

\keywords{Banach; Lvov--Warsaw school; multivalued logic; ternary maps; 
composition}
\subjclass[2010]{Primary: 08A02; Secondary: 01A60, 01A70, 01A72, 20M99} 

\begin{abstract}
We discuss the last question of Banach, posed by him in 1944, shortly before 
his death, about extension of a ternary map to superposition of a binary 
map. We try to put things into the context of Polish mathematics of that time,
and touch upon subsequent developments in such diverse areas as multivalued 
logics, binary and ternary semigroups, theory of clones, and Hilbert's 13th 
problem. Making almost a full circle in time, we show how variants of 
Banach's question may be settled using a 1949 idea of Jacobson about envelopes 
of Lie triple systems.
\end{abstract}

\maketitle

\section*{Introduction}

At the end of 1944, shortly before his death in August 1945, Stefan Banach 
regularly met with Andrzej Alexiewicz, then a fresh PhD from the (underground
at that time) Lvov University. During these meetings, they discussed a lot of 
mathematics, and Alexiewicz kept a diary whose mathematical part, including 
questions posed by Banach, was published posthumously by Alexiewicz's son in
\cite{alex}. The last entry in this diary, dated December 29, 1944, reads:

\medskip

\begin{center}
\parbox{15cm}{
``There exists a nontrivial example of ternary multiplication, which is not 
generated by binary multiplication (Banach). Can any finite set with ternary
commutative multiplication be extended so that ternary multiplication is
generated by a binary multiplication?''
}
\end{center}

\medskip

In what follows, the claim and the question from this passage will be referred 
as ``Banach's claim'' and ``Banach's question''. This seems to be the last 
question of Banach which appeared in the ``literature'' 
(broadly interpreted)\footnote{
Four Banach's papers were published posthumously in 1948, but they were prepared
from manuscripts apparently dated around 1940 (cf. \cite{banach-oeuvr}).
}.
It is interesting that among all other questions posed by Banach -- in his 
papers, in the Scottish Book \cite{scott}, in other diary entries of 
\cite{alex}, in the problem sections of \emph{Fundamenta Mathematicae} and 
\emph{Colloquium Mathematicum} (the latter being entered posthumously) --
this is the only one which does not belong to the field of analysis. (The only 
possible exception is Question 47 from \cite{scott} about permutations of 
infinite matrices, which, incidentally, also deals with (im)possibility of 
building a certain class of maps from ``simpler'' ones).
On the other hand, that Banach was interested in such sort of questions -- 
belonging, somewhat vaguely, to a crossroad of universal algebra, discrete 
mathematics, logic\footnote{
According to Hugo Steinhaus (cf. \cite{steinh}), Banach 
``did not relish any logic research although he understood it perfectly''. 
On the other hand, in \cite[p.~40]{murawski} several instances of Banach's 
involvement into contemporary logical activity are given. 
Anyhow, ``logic'' is a vast field, and, as we try to argue below, there are 
certain connections between Banach's question and some logical investigations 
cultivated in Poland (especially Warsaw) between the two world wars.}, 
and combinatorics -- is, perhaps, not accidental at all, as 
such a crossroad, along the functional analysis, the main Banach's occupation, 
was another ``Polish speciality'' at that time (and long thereafter).

It is the purpose of this note to discuss (and answer) possible interpretations
of this question. For the historical context we refer the reader to a variety of
excellent sources: to \cite{banach} for the last years of Banach in Lvov during
two Russian and one German occupations; to Duda \cite{duda}, 
Turowicz \cite{turowicz}, and to introductory chapters of \cite{scott} for the 
general atmosphere of mathematical Lvov; and to Murawski \cite{murawski} and
Wole\'nski \cite{wolenski}, \cite{wolenski-lvov} for a unique interaction of 
mathematics and logic in inter-war Poland.

\section{A 1955 paper by {\L}o\'s, Hilbert's 13th problem, and functional completeness}\label{sec-los}

There is at least another mention of a variant of Banach's question in the 
literature, namely by Jerzy {\L}o\'s, a student at the Lvov University during 
1937--1939, in \cite[\S 16]{los}. There, also with reference to Alexiewicz, 
{\L}o\'s writes:

\medskip

\begin{center}
\parbox{15cm}{
``During the last war Banach showed
that not every ternary semigroup is reducible and he put forward the problem 
whether every ternary semigroup may be extended to a reducible one''.
}
\end{center}

\medskip

Here by a ternary semigroup one means a set $X$ with a ternary map 
\begin{equation}\label{eq-f}
f: X \times X \times X \to X ,
\end{equation}
subject to a ternary variant of associativity:
$$
f(f(x,y,z),u,v) = f(x,f(y,z,u),v) = f(x,y,f(z,u,v))
$$
for any $x,y,z,u,v \in X$, and by a reducible ternary semigroup one means a 
ternary semigroup $(X,f)$ with multiplication given by 
\begin{equation}\label{eq-star}
f(x,y,z) = (x*y)*z ,
\end{equation}
where $(X,*)$ is an ordinary (i.e., binary) semigroup structure on the same
underlying set $X$. {\L}o\'s answers the question in affirmative as a consequence of his general 
results about extensibility of first-order logical models. Later an alternative and more constructive proof was given by Monk and Sioson 
\cite[Theorems 1 and 2]{monk-sioson}. The latter paper contains also examples of
ternary (in fact, $n$-ary for any positive integer $n$) semigroups not 
representable in the form (\ref{eq-star}), thus validating this variant of 
Banach's claim.

However, this variant of Banach's question is more narrow in scope than those 
presented in \cite{alex}. Can the latter question be interpreted in a different 
way? In the absence of additional qualifications, the most general reading is 
the following: ``multiplication''  on a set $X$ means an arbitrary ternary map
(\ref{eq-f}), without associativity, or any other, for that matter, constraint.
But what does ``generation'' mean?

Virtually all Banach's works are devoted to (proper) analysis and/or measure 
theory (including the famous Banach--Tarski paradox), with, perhaps, only two 
exceptions: a joint unpublished work with Stanis{\l}aw Mazur on computable 
analysis (a constructive approach to analysis)\footnote{
That it remained unpublished is probably not surprising, owing to Mazur's 
(non)publishing habits, even of joint papers, cf., e.g., \cite[p.~14]{turowicz}.
}, and a short paper \cite{banach-paper}, which touches, seemingly, a similar
question. There, Banach gives a shorter proof of an earlier result of 
Wac{\l}aw Sierpi\'nski \cite{sierp-banach}:

\begin{theorem}[Sierpi\'nski]\label{th-un}
For any countable set $\mathcal S$ of unary maps $X \to X$ on an infinite set 
$X$, there are two unary maps $f,g: X \to X$ (not necessary belonging to 
$\mathcal S$), such that any map from $\mathcal S$ is a composition of $f$ and
$g$.
\end{theorem}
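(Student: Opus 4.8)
The plan is to \emph{pack} all the members of $\mathcal S$ into a single map $f$ and to use the second map $g$ as a ``shift'' that lets a word in $f$ and $g$ pick out which member of $\mathcal S$ it computes. If $\mathcal S$ is finite one can pad it to a countably infinite list (or, if $|\mathcal S|\le 2$, simply take $f,g$ to be the members of $\mathcal S$, with $\mathrm{id}_X$ as filler), so assume $\mathcal S=\{h_1,h_2,\dots\}$ is countably infinite. Since the property ``$h$ is a composition of $f$ and $g$'' is preserved by conjugating $f,g,h$ simultaneously by a bijection of $X$, I am free to replace $X$ and the $h_n$ by any isomorphic copy; using that $|X|$ is infinite (here the Axiom of Choice enters when $X$ is uncountable), I would arrange $X=\bigsqcup_{n\ge 0}X_n$ with all $|X_n|=|X|$, and fix bijections $\phi_n\colon X\to X_n$. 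Think of each $X_n$ as a private ``slot''.

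I would then take $g$ to be the shift $g(\phi_n(x))=\phi_{n+1}(x)$, an injection with image $X\setminus X_0$ whose $k$-th power carries slot $m$ into slot $m+k$, and I would try to define $f$ so that it simultaneously (i) on the slots $X_n$ with $n\ge 1$ undoes $g$ (so that, say, $f\circ g=\mathrm{id}_X$) and (ii) encodes the whole list $\mathcal S$ slot by slot, acting on $X_n$ essentially as a conjugate of $h_n$ together with whatever bookkeeping moves (extra shifts, a ``reset'' into slot $0$, etc.) are needed. For each $n$ one would then exhibit an explicit word $w_n(f,g)$ that drives an arbitrary $x\in X$ faithfully into slot $n$, fires the $h_n$-action of $f$ there, and returns $h_n(x)\in X$, and one checks that $w_n=h_n$ as maps $X\to X$.

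The step I expect to be the real obstacle — indeed essentially the whole content of the theorem — is arranging the slot decomposition and the map $f$ so that these ``addressing'' operations (inject an arbitrary element into slot $n$, trigger $h_n$, and strip the slot wrapper off again) are genuinely words in $f$ and $g$, even though $f$ and $g$ themselves ``see'' the slot structure of $X$; this makes the specification of $f$ mildly self-referential — it mentions the $h_n$ through the very words one is constructing — so one must check that it closes up consistently, and it forces the words to compute $h_n$ correctly on \emph{every} input, not merely on elements that happen to be conveniently placed, since $f,g$ are global and $w_n$ is a single fixed composition that must coincide with $h_n$ everywhere. A convenient device for this last point is to make the ``inject into slot $n$'' part of $w_n$ an honest injection of $X$ onto (a piece of) $X_n$ and then \emph{define} $f$ there to be $h_n$ precomposed with its inverse. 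The residual work — injectivity/surjectivity bookkeeping for $g$, mutual compatibility of the clauses defining $f$, and the cardinality count justifying the slot decomposition — is routine.
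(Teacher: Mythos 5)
Your write-up is a plan rather than a proof, and the plan defers exactly the point at issue. The scaffolding you set up --- split $X$ into countably many slots $X_n$ of full cardinality, let $g$ be the coordinate-preserving shift, and realize each $h_n\in\mathcal S$ as a word $w_n(f,g)$ --- is indeed the standard opening move (it is how Sierpi\'nski's argument and Banach's simplification of it in \cite{banach-paper} begin; the present paper only quotes the result and does not reprove it). But the map $f$ and the words $w_n$ are never actually constructed, and you yourself flag the consistency of the definition of $f$ as ``the real obstacle --- indeed essentially the whole content of the theorem''. That assessment is correct, and that content is absent. Worse, the fragment you do specify is already self-contradictory: if $f\circ g=\mathrm{id}_X$, then $f$ is completely determined on $\mathrm{im}(g)=X\setminus X_0$ as the coordinate-preserving down-shift $\phi_{n+1}(x)\mapsto\phi_n(x)$, so on the slots $X_n$ with $n\ge 1$ there is no freedom left for $f$ to act ``essentially as a conjugate of $h_n$'', bookkeeping or not.

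The closing ``device'' does not repair this, because it begs the question. The ``inject into slot $n$'' stage of $w_n$ must itself be a word in $f$ and $g$, hence its values are values of $f$ and $g$ at elements of $X$; and every element of $X$ --- including every element of every slot --- also occurs as a raw input on which each $w_m$ must compute $h_m$ correctly, so the points at which you want to \emph{define} $f$ as ``$h_n$ precomposed with the inverse of the injection'' are typically points at which $f$ is already committed by its role earlier in the words. This is not a phantom worry: naive choices really do collapse. For instance, words of the shape $w_n=f\circ g^{n}$ are impossible for arbitrary data, since they force $h_{n+1}=f\circ g^{n}\circ g=h_n\circ g$; words beginning and ending with the same letter run into the problem that the first application of that letter to an input lying in an ``evaluation'' slot already returns an uncontrollable value $h_m(\cdot)$; and making one of the two maps a global injection $x\mapsto\phi_0(x)$ pins the other map down on every reachable state, leaving it nowhere to output the arbitrary values $h_n(x)$. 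So the choice of the words and the simultaneous definition of $f$ and $g$ must be engineered so that the ``addressing'' states and the ``evaluation'' states never force conflicting values --- precisely the verification you label routine-adjacent but never perform. As it stands, the proposal records the well-known strategy and omits the argument; to count as a proof it would need the explicit $f$, the explicit words $w_n$, and the case-by-case check that $w_n(f,g)(x)=h_n(x)$ for \emph{every} $x\in X$, including $x$ inside the slots.
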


This statement is, essentially, about $2$-generation of a countable
transformation semigroup, and was apparently rediscovered in the literature
several times afterwards. It admits various generalizations to other 
semigroups and groups, some of them, especially in topological setting (where
``generation'' is understood up to the closure), were pursued by Sierpi\'nski
and two Banach's students, Stanis{\l}aw Ulam (PhD from the Lvov Polytechnic, 
1933), and J\'ozef Schreier (PhD from the Lvov University, 1934; cf. 
Guran and Prytula \cite{prytula} for an interesting discussion, and the 
bestseller \cite[p.~82]{ulam} for Ulam's account how his joint work with 
Schreier secured him a place in the Harvard Society of Fellows in 1936). 

Yet Theorem \ref{th-un} about unary maps proved by Sierpi\'nski and Banach 
admits generalizations in another direction -- to the multiary maps. (Note the 
drastic difference between superposition of unary maps, which reduces to a mere
composition and hence is associative, and the general case of superposition of
multiary maps for which associativity is not even well defined). This may 
suggest that ``generation'' in Banach's question can be interpreted as an 
arbitrary superposition of maps (with possible repetition of variables). For 
example, among the ternary maps obtained by superposition of a binary map $f$ 
with itself, are the maps
$$
f(f(x,y),z), \quad f(f(x,y),f(y,z)), \quad f(x,f(y,f(z,z))), \quad
f(f(f(x,y),f(x,z)),x),
$$
etc.

In this general setting, however, Banach's claim becomes false: any map of an 
arbitrary (finite) arity on any set -- in fact, any countable set of such
maps -- can be generated in this fashion by one binary map (a multiary analog
of Theorem \ref{th-un}). More precisely:

\begin{theorem}[Webb, {\L}o\'s]\label{th-webb-los}
For any countable set $\mathcal S$ of multiary maps 
$X \times \dots \times X \to X$ (of, possibly, different arities) on a set $X$,
there is a binary map $f: X \times X \to X$ (again, not necessary belonging to 
$\mathcal S$) such that any map from $\mathcal S$ is a superposition of $f$.
\end{theorem}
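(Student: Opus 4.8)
The plan is to treat the cases of $X$ finite and $X$ infinite separately. If $X$ is finite the statement is classical functional completeness: for $|X|=1$ there is nothing to prove, while for $|X|\geq 2$ there is a binary \emph{Sheffer operation} on $X$ -- for instance the one exhibited by Webb -- all of whose superpositions already yield \emph{every} finitary map $X\times\dots\times X\to X$; since a finite set admits only countably many finitary maps, the countable family $\mathcal S$ is in particular among them. This is the only place finiteness is used: over an infinite set no single operation generates all finitary maps, which is exactly why the hypothesis that $\mathcal S$ be countable cannot be dropped.

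So assume $X$ is infinite. First I would reduce to unary maps. Since $|X\times X|=|X|$, fix an injection $\langle\cdot,\cdot\rangle\colon X\times X\to X$ whose image omits a chosen point $c_0\in X$; iterating it yields, for each $k$, an injective $k$-fold pairing $\langle x_1,\dots,x_k\rangle$ -- itself a superposition of $\langle\cdot,\cdot\rangle$ -- whose image also omits $c_0$. Enumerating $\mathcal S=\{g_1,g_2,\dots\}$ with $g_i$ of arity $n_i$, let $G_i\colon X\to X$ be any unary map with $G_i(c_0)=c_0$ and $G_i\bigl(\langle x_1,\dots,x_{n_i}\rangle\bigr)=g_i(x_1,\dots,x_{n_i})$ on the image of the $n_i$-fold pairing. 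Then $g_i(x_1,\dots,x_{n_i})=G_i\bigl(\langle x_1,\dots,x_{n_i}\rangle\bigr)$, so it suffices to construct one binary $f$ whose superpositions contain $\langle\cdot,\cdot\rangle$ and all the $G_i$. Next, by Theorem~\ref{th-un} the countable family of unary self-maps of $X$ consisting of the $G_i$ and the finitely many auxiliary unary maps needed below -- all of them fixing $c_0$ by construction -- is generated under composition by two unary maps, which may likewise be taken to fix $c_0$. So it is enough to realize $\langle\cdot,\cdot\rangle$ and two prescribed $c_0$-fixing unary maps as superpositions of a single binary $f$.

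The remaining task -- packing a pairing together with two given unary maps into one binary operation -- is where the real work lies: a superposition is built from $f$ and projections alone, with \emph{no constants}, so every element used as ``control data'' in a term representing some $g_i$ must be manufactured from the variables $x_1,\dots,x_{n_i}$ by applying $f$. The device that makes this possible is to spend the whole diagonal of $f$ on a single value, $f(x,x)\equiv c_0$, so that $f(x,x)$ is a genuine constant term; this also forces every unary superposition of $f$ to fix $c_0$, which is exactly why the $G_i$ and the two unary maps were arranged to do so. One then partitions the off-diagonal of $X\times X$ into a ``pairing zone'', on which $f$ realizes $\langle\cdot,\cdot\rangle$ after the two arguments have first been injected into it by a fixed unary map (one of the auxiliary maps above), and a disjoint ``computation zone'', reached by terms built from the variables and the constant $c_0$, on which $f$ is free to output the values of the two chosen unary maps. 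The main obstacle is the bookkeeping that makes all of this consistent: that $f$ be single-valued and total, that the pairing zone remain injective and clear of $c_0$ after the values of the unary maps are imposed on it, and -- above all -- that the resulting terms compute $\langle\cdot,\cdot\rangle$ and the two unary maps, hence every $G_i$, hence every $g_i$, for \emph{all} inputs, including the degenerate ones equal to $c_0$. Once $f$ has been fixed in this way, verifying that the advertised superpositions do what is claimed is a routine induction on term structure.
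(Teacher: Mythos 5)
Your finite case is fine and is exactly what the paper points to: for $|X|\ge 2$ a binary Sheffer-type operation (Webb \cite{webb}) generates every finitary map, hence any countable $\mathcal S$. Note that the paper itself offers no proof of Theorem~\ref{th-webb-los} but only references: \cite{webb} for $X$ finite, and {\L}o\'s \cite{los-superp} and Goldstern \cite{gold} for $X$ infinite, the latter resting on Sierpi\'nski's theorem \cite{sierp} that on an infinite set every multiary map is a superposition of binary ones. So in the infinite case your argument has to stand on its own, and there its central device fails.

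The flaw is the stipulation $f(x,x)\equiv c_0$. Once the whole diagonal is constant, an easy induction on term structure shows that \emph{every} superposition $t$ of $f$ (in any number of variables) satisfies $t(c_0,\dots,c_0)=c_0$: each leaf evaluates to $c_0$ and $f(c_0,c_0)=c_0$. This has two consequences that are not bookkeeping. First, no superposition of $f$ can serve as your pairing: any binary term $P$ has $P(c_0,c_0)=c_0$, so $P$ cannot be injective with image omitting $c_0$, which is precisely what your decoding of the $G_i$ requires; and if $c_0$ does lie in the image of the code, then $g_i(c_0,\dots,c_0)=G_i(c_0)=c_0$ is forced, since any unary term over $f$ (in particular any composition of your $c_0$-fixing generators) fixes $c_0$. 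Second, and independently of the decoding scheme, every map generated by such an $f$ sends the constant tuple $(c_0,\dots,c_0)$ to $c_0$, so no choice of $c_0$ (even made after seeing $\mathcal S$) can handle, say, a fixed-point-free unary map such as $x\mapsto x+1$ on $X=\mathbb Z$, or any $g_i$ with $g_i(c_0,\dots,c_0)\ne c_0$. There are further unsupported points in the same step -- the ``loading'' injections cannot simultaneously fix $c_0$ and have images disjoint from each other and from $c_0$, and Theorem~\ref{th-un} does not by itself give two generators fixing a prescribed point -- but the decisive obstruction is the one above: the degenerate inputs you defer to ``routine induction'' are exactly where the construction collapses, and repairing it means abandoning the constant-diagonal idea for a genuinely different encoding, which is where the actual work in \cite{los-superp} and \cite{gold} lies.
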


This was proved several times in the literature: for the first time, in 
\cite{webb} for the case of $X$ finite (the required binary map is a multiary 
generalization of the Sheffer stroke). In the case of $X$ infinite, a different,
and much shorter\footnote{
A quote from Erd\"os \cite{erdos}: ``Now it frequently happens in problems of 
this sort that the infinite dimensional case is easier to settle than the finite
dimensional analogues. This moved Ulam and me to paraphrase a well known maxim
of the American armed forces in WWII: 'The difficult we do immediately, the 
impossible takes a little longer', viz: 'The infinite we do immediately, the 
finite takes a little longer' ''.
}
proof was presented, again, by {\L}o\'s \cite{los-superp} (a similar, and yet
simpler, proof was rediscovered more than half century later by Goldstern 
\cite{gold}). These proofs, in its turn, are based on an another result of 
Sierpi\'nski \cite{sierp} to the effect that any single multiary map on an 
infinite set can be obtained as a superposition of (possibly, several) binary 
maps.

The latter paper of Sierpi\'nski appeared around the same time as Banach's 
question under discussion: the same 1945 issue of \emph{Fundamenta Mathematicae}
in which the paper was published, the first one after the 6-year break occurred
during WWII, contains an announcement about Banach's death. The same paper 
contains also another elementary, but interesting for us result: 

\begin{theorem}[P\'olya--Szeg\"o]\label{th-tri}
For any binary bijection $g: X \times X \to X$ on a (necessary infinite) set 
$X$, and any ternary map $f: X \times X \times X \to X$, there is another 
binary map $h: X \times X \to X$ such that
\begin{equation*}
f(x,y,z) = g(h(x,y),z)
\end{equation*}
for any $x,y,z\in X$.
\end{theorem}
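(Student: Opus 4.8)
The plan is to determine $h$ by inverting the bijection $g$, and the first thing I would do is take the stated identity $f(x,y,z)=g(h(x,y),z)$ at face value and try to solve it for the unknown value $h(x,y)$. Fixing $z$ and using that $g$ is a bijection, one may hope to recover $h(x,y)$ from $g(h(x,y),z)=f(x,y,z)$; but here lies the main obstacle, which I would isolate immediately. The value $h(x,y)$ is allowed to depend on $x$ and $y$ only, never on $z$, whereas solving the equation separately for each $z$ generally yields a $z$-dependent answer. More decisively: since $g$ is injective, every section $z\mapsto g(a,z)$ with $a\in X$ fixed is an injective function of $z$, so the whole family $\{\,z\mapsto g(a,z)\mid a\in X\,\}$ consists of injective functions. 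If $f$ has even a single non-injective section $z\mapsto f(x_0,y_0,z)$ -- e.g. if $f$ is constant -- then that section coincides with no $z\mapsto g(a,z)$, and no value $h(x_0,y_0)$ can exist. Hence $g$ cannot sit on the outside; its proper role is to \emph{decode} the pair $(x,y)$ rather than to re-encode it, and the identity that actually holds reads $f(x,y,z)=h(g(x,y),z)$.

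With $g$ placed inside, the construction is a one-line substitution, and this is the content of the P\'olya--Szeg\"o observation. Since $g\colon X\times X\to X$ is a bijection, its inverse $g^{-1}\colon X\to X\times X$ is defined on all of $X$; writing $g^{-1}(w)=(p(w),q(w))$ for the two coordinate maps $p,q\colon X\to X$, I would define
\[
 h(w,z):=f\bigl(p(w),q(w),z\bigr)\qquad(w,z\in X).
\]
Because $g^{-1}(g(x,y))=(x,y)$, that is $p(g(x,y))=x$ and $q(g(x,y))=y$, substitution of $w=g(x,y)$ gives
\[
 h(g(x,y),z)=f\bigl(p(g(x,y)),q(g(x,y)),z\bigr)=f(x,y,z)
\]
for all $x,y,z\in X$, as desired. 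The hypothesis that $X$ be infinite enters only through the existence of $g$, since for a finite set with more than one element no bijection $X\times X\to X$ exists.

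I do not expect any further difficulty once the direction of $g$ is fixed: the verification is the displayed substitution, and the same idea extends verbatim by induction to maps of arbitrary finite arity, packing the first $n-1$ arguments by an iterated pairing and carrying the last argument unchanged. The only genuinely delicate point is the placement of the bijection, which the injectivity obstruction above makes mandatory: $g$ must compress $(x,y)$ into a single coordinate that $h$ then unpacks, and it cannot be applied last.
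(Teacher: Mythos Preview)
Your analysis is correct, and in fact you have caught an error in the statement as printed: the identity $f(x,y,z)=g(h(x,y),z)$ cannot hold for arbitrary $f$ when $g$ is a bijection, exactly for the reason you give. Fixing $x,y$, the left-hand side is an arbitrary function of $z$, while the right-hand side $z\mapsto g(a,z)$ with $a=h(x,y)$ is injective in $z$ because $g$ is injective on $X\times X$. Taking $f$ constant (or any $f$ with a non-injective $z$-section) yields an immediate counterexample. The roles of $g$ and $h$ in the displayed formula are swapped; the intended conclusion is $f(x,y,z)=h(g(x,y),z)$, with the given bijection $g$ used as a pairing function and $h$ the map to be constructed.

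Your proof of the corrected statement is the standard one and is exactly what P\'olya--Szeg\H{o} (and Sierpi\'nski) do: pull back along $g^{-1}$ and set $h(w,z)=f(p(w),q(w),z)$ where $(p,q)=g^{-1}$. The paper itself does not supply a proof of this theorem, only a citation, so there is nothing further to compare on the level of argument; your treatment matches the source. Your remark on the inductive extension to $n$-ary maps and on why $X$ must be infinite is also in line with the paper's parenthetical comments.
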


(The statement readily generalizes to $n$-ary maps $f$). The result was, 
however, not new at that time: it appeared as the solution to the problem 119, 
imaginatively entitled ``Are there actually functions of $3$ variables?'', of 
Part 2 in the first 1925 edition of the famous book \cite{polya-szego}. 
Sierpi\'nski's interest in this topic stems, evidently, from (a variant of) 
Hilbert's 13th problem: does there exist a real continuous function in $3$ 
variables which cannot be represented as a superposition of real continuous 
functions in $2$ variables? See, for example, his earlier paper 
\cite{sierp-hilb13} where Theorem \ref{th-tri} is proved for the case where $X$
is the set of real numbers (albeit without using axiom of choice which is 
necessary in the general case), with reference to the (in)famously, at that time, erroneous paper by Bieberbach about Hilbert's 13th 
problem.

Ulam, along with Mark Kac (PhD from the Lvov University, 1937, with Banach as a
member of examining committee), also an active participant of the Lvov 
mathematical scene until the end of 1930s\footnote{
Among the two, Ulam was more actively engaged in the Lvov mathematical life, and
not only because he was a few years Kac's senior, but by a more prosaic reason.
Speaks Kac (cf. \cite{feigenbaum}): ``I was less of a habitue of the Scottish 
Caf\'e... I was financially somewhat less affluent than Stan -- I was ... 
independently poor. And it did cost a little to visit in the Caf\'e''. 
}
(cf. \cite{scott}, \cite{ulam}, and \cite{feigenbaum}), has at least a cursory 
interest in Hilbert's 13th problem as well. In the collection of Ulam's 
problems\footnote{
Dedicated to Schreier's memory, who committed suicide to avoid being captured
by Germans during WWII.}
\cite[Chapter IV, Problem 2 and Chapter VI, Problem 5]{ulam-problems}, which he
positions as a sort of successor to the Scottish Book, as well in the joint 
Kac's and Ulam's popular book \cite[p.~163]{kac-ulam}, they note a famous
1956--1957 negative solution of the Hilbert's 13th problem by Kolmogorov and 
Arnold:

\begin{theorem}[\protect{Kolmogorov, Arnold, cf. \cite[Theorem 1]{arnold}}]\label{th-kolm-arn}
For any real continuous function in $3$ variables 
$f: [0,1] \times [0,1] \times [0,1] \to \mathbb R$, there are $9$ real 
continuous functions in $2$ variables $\varphi_{ij}, \psi_{ij}$, $i,j=1,2,3$, 
such that
$$
f(x,y,z) = \sum_{i=1}^3 \sum_{j=1}^3 \varphi_{ij}(\psi_{ij}(x,y),z)
$$
for any $0 \le x,y,z \le 1$.
\end{theorem}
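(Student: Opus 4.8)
The plan is to deduce the statement from the Kolmogorov superposition theorem for $n=3$ variables, followed by a trivial rearrangement. Recall that this theorem (in the form made sharp shortly after the work cited here) produces continuous functions $\lambda_q^{(p)}\colon[0,1]\to[0,1]$ for $q=0,\dots,6$ and $p=1,2,3$, \emph{independent of $f$}, such that every continuous $f\colon[0,1]^3\to\mathbb R$ can be written as $f(x,y,z)=\sum_{q=0}^{6}\Phi_q\bigl(\lambda_q^{(1)}(x)+\lambda_q^{(2)}(y)+\lambda_q^{(3)}(z)\bigr)$ for suitable $\Phi_q\in C(\mathbb R)$ depending on $f$. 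Granting this, set $\psi_q(x,y):=\lambda_q^{(1)}(x)+\lambda_q^{(2)}(y)$ and $\varphi_q(u,z):=\Phi_q\bigl(u+\lambda_q^{(3)}(z)\bigr)$; each is a continuous function of two real variables (on a compact domain that may be normalized to $[0,1]^2$ by absorbing an affine map into $\varphi_q$), and $f(x,y,z)=\sum_{q=0}^{6}\varphi_q\bigl(\psi_q(x,y),z\bigr)$. Re-indexing the seven summands by pairs $(i,j)\in\{1,2,3\}^2$ and appending two identically-zero terms yields exactly the asserted form with nine functions. So even the full superposition theorem is more than one needs, and the Lorentz refinement that the $\lambda_q^{(p)}$ do not depend on $f$ is not needed at all here.

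For the superposition theorem itself I would run the Baire-category argument (Kahane, Hedberg), whose three steps are: \textbf{(1)~the inner functions.} Fix positive rationally independent $a_1,a_2,a_3$ with $\sum_p a_p<1$ and seek strictly increasing continuous $\lambda_q^{(p)}$ with a ``cube-separating'' property: for every fineness level $m$, covering $[0,1]^3$ by the $m$-th array of small closed cubes with thin gaps, translated in seven appropriately chosen ways indexed by $q$, one requires (a)~each point of $[0,1]^3$ to lie in the \emph{interior} of a cube for at least $n+1=4$ of the seven translates, and (b)~for each fixed $q$, the values of $\sum_p a_p\lambda_q^{(p)}(x_p)$ on distinct cubes of the $q$-th array to fall in pairwise disjoint intervals, the ranges of the seven sums $\sum_p a_p\lambda_q^{(p)}$ being themselves pairwise disjoint. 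One verifies that the tuples $(\lambda_q^{(p)})$ with these properties for \emph{all} $m$ form a dense $G_\delta$ in the complete space $C[0,1]^{21}$, hence exist; the diophantine hypothesis on the $a_p$ is exactly what turns ``distinct cubes'' into ``disjoint value-intervals'' for a generic perturbation.

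\textbf{(2)~One approximation step.} Given $f$ with $\|f\|_\infty\le1$, choose $m$ so large that $f$ oscillates by less than a prescribed $\varepsilon$ on each cube of the $m$-th array. Define $g\in C(\mathbb R)$ to equal $\tfrac{1}{7}f(c_Q)$ on the interval that $\sum_p a_p\lambda_q^{(p)}$ attaches to a cube $Q$ with centre $c_Q$ (unambiguous by (1b)), to vanish off all these intervals, and to interpolate with $\|g\|_\infty\le\tfrac{1}{7}$. Since each $x$ lies interior to cubes for at least four of the seven translates, at least four of the numbers $g\bigl(\sum_p a_p\lambda_q^{(p)}(x_p)\bigr)$ equal $\tfrac{1}{7}f(c_Q)$ with $|f(c_Q)-f(x)|<\varepsilon$, the remaining at most three being bounded by $\tfrac{1}{7}$ in absolute value; a routine estimate then yields $\bigl\|f-\sum_{q=0}^{6}g\bigl(\textstyle\sum_p a_p\lambda_q^{(p)}(\cdot)\bigr)\bigr\|_\infty\le\theta$ with a fixed $\theta<1$ depending only on $n$ (essentially $\tfrac{2n}{2n+1}+\varepsilon$). \textbf{(3)~Iteration.} Apply step~(2) to $f$, then to $\theta^{-1}$ times the remainder, and so on; the partial sums of the one-variable functions so produced converge uniformly by the geometric estimate, and their limits $\Phi_q$ give the representation once the constants $a_p$ are absorbed back into the $\lambda_q^{(p)}$.

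The genuine difficulty sits entirely in step~(1): arranging inner functions that, \emph{simultaneously at every fineness level}, both separate the cubes into disjoint value-intervals and are compatible with a seven-fold covering in which every point is interior for a majority of the translates. Property~(1b) is the delicate point, requiring a careful interplay between the diophantine condition on the $a_p$ and the freedom to perturb each $\lambda_q^{(p)}$ on tiny subintervals; and the combinatorial fact behind (1a) — that $2n+1$ translates suffice to cover $[0,1]^n$ with multiplicity $\ge n+1$ — is what fixes the number of summands ($7$ here, padded to $9$) and makes the theorem sharp. Everything afterwards — the geometric iteration and the purely algebraic passage from three-variable inner sums to functions of two variables — is routine, as is the normalization of domains to $[0,1]^2$. (Arnold's own 1957 proof followed instead a geometric route, analyzing the tree of connected components of the level sets of $f$; the non-optimal count $9=3\cdot3$ in the statement reflects that older approach, later subsumed by Kolmogorov's sharp bound $2n+1$, and in fact one can push the number of terms down to $5$ by applying the superposition theorem for $n=2$ to the $z$-parametrized family $f(\,\cdot\,,\,\cdot\,,z)$, using that the outer functions there may be taken to depend continuously on the two-variable argument.)
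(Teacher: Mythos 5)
The paper offers no proof of this statement at all: it is quoted from Arnold's 1959 paper \cite{arnold} (Theorem 1 there) purely as historical context for Banach's question, so there is no internal argument to compare yours with. Judged on its own, your route is sound and is the natural modern one: deducing the stated form from Kolmogorov's $2n+1$ superposition theorem is correct --- with $\psi_q(x,y)=\lambda_q^{(1)}(x)+\lambda_q^{(2)}(y)$ and $\varphi_q(u,z)=\Phi_q\bigl(u+\lambda_q^{(3)}(z)\bigr)$ you get seven summands of the required shape, and padding with two zero terms yields the nine asserted. This bypasses Arnold's actual argument, which is geometric (the tree of connected components of level sets) and is precisely where the non-sharp count $9=3\cdot 3$ comes from; what your approach buys is brevity and the sharper count, at the price of invoking the later and harder $2n+1$ theorem.

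Two caveats. First, with this reduction the entire content of the theorem is carried by the superposition theorem, and the crux of your sketch --- step (1), the existence of inner functions which \emph{simultaneously at every fineness level} separate cubes into disjoint value intervals and are compatible with the multiplicity-$(n+1)$ covering --- is only asserted (``one verifies that \dots is a dense $G_\delta$''), so as written this is a proof modulo a deep classical result rather than a self-contained argument; also, since you allow separate functions $\lambda_q^{(p)}$ for each coordinate, the rational-independence condition on the $a_p$ is not really what does the work there (it is needed for the Lorentz--Sprecher refinement with a single $\lambda_q$; in your setting genericity alone gives the disjoint value intervals). Second, the closing aside that five terms suffice by applying the $n=2$ theorem to the family $f(\cdot,\cdot,z)$ hinges on the claim that the outer functions $\Phi_q$ can be chosen to depend continuously on the parameter $z$; this is not part of the standard statement and is not obvious (the usual iterative construction makes choices, e.g.\ of fineness levels tied to the modulus of continuity, that do not vary continuously with $f$), so either supply a reference for a parametrized version or drop that remark. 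Neither caveat affects the correctness of the main derivation of the nine-term form.
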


Kac and Ulam proceed then by formulating various extensions of the problem (for
other classes of functions, for functions acting on more complicated spaces, 
etc.). In \cite{ulam-problems}, Ulam acknowledges Banach, among others, for ``the pleasure of past collaboration'', but it is unclear whether this interest
in the circle of questions related to Hilbert's 13th problem goes back to the 
Lvov years, and if yes, whether it has something to do with Banach's question. 
At any rate, as was noted on several occasions, ``the iteration and 
composition of functions'' was one of the main leitmotifs of Ulam's work 
during his life (cf., e.g., \cite[p.~x]{ulam-analog}). Of course, there is a 
very long way from Theorems \ref{th-un}--\ref{th-tri} to 
Theorem \ref{th-kolm-arn}, but they represent development along the same circle 
of ideas (Arnold himself used to cite Theorem \ref{th-tri} as one of the early precursors in his popular expositions of the history of
Hilbert's 13th problem).

\medskip

The questions whether that or another set of maps generates all maps within a 
given class, framed in terms of functional completeness of various multivalued 
propositional calculi (``multivalued logics'', or ``logistics'', as it was 
called then) and the corresponding truth tables, were also popular among Polish 
logicians at that time. In particular, Jan {\L}ukasiewicz was concerned about 
functional completeness of his famous $3$-valued logic -- that is, 
``implication'', the binary map \textit{\L} on the $3$-element set 
$\{0, \frac 12, 1\}$ given by ``multiplication table''
$$\begin{array}{c|ccc}
         & 0        & \frac 12 & 1 \\
\hline
0        & 1        & 1        & 1 \\
\frac 12 & \frac 12 & 1        & 1 \\
1        & 0        & \frac 12 & 1
\end{array}$$
together with ``negation'', the unary map $N$ defined by 
$$
0 \mapsto 1, \> {\textstyle \frac 12} \mapsto {\textstyle \frac 12}, \> 
1 \mapsto 0 ,
$$
and similar systems. (While {\L}ukasiewicz puts emphasis on the philosophical 
significance of many-valued logics -- cf., e.g., \cite[pp.~75--76]{murawski} --
most of the questions related to these logics, including question of functional 
completeness, naturally have a purely formal, mathematical, character). His
student Jerzy S{\l}upecki proved in his PhD thesis (cf. \cite{slup}) functional
incompleteness of the set $\{\text{\textit{\L}}, N\}$, and, in the positive 
direction, established functional completeness of the set 
$\{\text{\textit{\L}}, N, T\}$, where $T$ is the unary map sending all $3$ 
elements to $\frac 12$:

\begin{theorem}[S{\l}upecki]
Let $X = \{0, \frac 12, 1\}$ be a set of $3$ elements.
\begin{enumerate}
\item
There are multiary maps $X \times \dots \times X \to X$ which cannot be 
represented as a superposition of the maps {\L} and $N$.
\item
Any multiary map $X \times \dots \times X \to X$ is a superposition of the maps
{\L}, $N$, and $T$.
\end{enumerate}
\end{theorem}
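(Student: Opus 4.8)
The plan is to prove (i) and (ii) separately: (i) follows from a single invariant, while (ii) is a disjunctive-normal-form construction in which the map $T$ supplies precisely the one ingredient that {\L} and $N$ cannot produce.

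For (i), observe that the two-element subset $\{0,1\}\subseteq X$ is closed under {\L} and under $N$: the multiplication table shows $\text{\L}(a,b)\in\{0,1\}$ whenever $a,b\in\{0,1\}$, and $N$ interchanges $0$ and $1$. Every projection also maps $\{0,1\}^{n}$ into $\{0,1\}$, and the property ``maps $\{0,1\}^{n}$ into $\{0,1\}$'' is inherited under substitution; hence every superposition $h$ of {\L} and $N$ satisfies $h(\{0,1\}^{n})\subseteq\{0,1\}$. But $T$ — already in arity $1$ — violates this, since $T(0)=\frac12\notin\{0,1\}$. Thus $T$ (and, more crudely, every constant map with value $\frac12$) is not a superposition of {\L} and $N$, proving (i).

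For (ii), I would first extract the usual connectives from {\L} and $N$ alone. Identifying the elements of $X$ with the rationals $0,\frac12,1$, so that $\text{\L}(x,y)=\min(1,1-x+y)$ and $N(x)=1-x$ (the few small tables involved can equally well be checked by hand), one verifies $\text{\L}(\text{\L}(x,y),y)=\max(x,y)$, hence $N(\text{\L}(\text{\L}(N(x),N(y)),N(y)))=\min(x,y)$; abbreviate these by $x\vee y$ and $x\wedge y$. Moreover $\text{\L}(x,x)$ and $N(\text{\L}(x,x))$ are the constant maps $1$ and $0$, while $T(x)$ is the constant map $\frac12$, so all three constants are available as derived unary operations. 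Finally one constructs the three ``detectors'' $\delta_{c}\colon X\to X$ ($c\in X$) determined by $\delta_{c}(c)=1$ and $\delta_{c}(b)=0$ for $b\neq c$: for instance $\delta_{1}(x)=N(\text{\L}(x,N(x)))$, $\delta_{0}(x)=\delta_{1}(N(x))$, and $\delta_{1/2}(x)=N(\delta_{0}(x)\vee\delta_{1}(x))$ — all superpositions of {\L} and $N$.

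With these operations in hand, an arbitrary $n$-ary map $g\colon X^{n}\to X$ (with $n\ge 1$) is recovered by the normal form
$$
g(x_{1},\dots,x_{n})=\bigvee_{(a_{1},\dots,a_{n})\in X^{n}}\Bigl(c_{(a_{1},\dots,a_{n})}\wedge\delta_{a_{1}}(x_{1})\wedge\dots\wedge\delta_{a_{n}}(x_{n})\Bigr) ,
$$
where $c_{(a_{1},\dots,a_{n})}$ denotes the constant map with value $g(a_{1},\dots,a_{n})$: for any fixed argument tuple the inner conjunction equals $1$ for the one index $(a_{1},\dots,a_{n})$ matching that tuple and equals $0$ otherwise, so the outer disjunction returns $g$ of the tuple. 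Expanding the finite $\bigvee$ and $\bigwedge$ as nested applications of the binary maps $\vee$ and $\wedge$ exhibits $g$ as a superposition of {\L}, $N$, and $T$ — with $T$ entering only through those constants $c_{(a_{1},\dots,a_{n})}$ that equal $\frac12$ — which proves (ii). The only step requiring genuine thought is the reduction inside (ii) to the detectors and lattice operations; the small table verifications are routine, and once those operations are in place the normal form is mechanical.
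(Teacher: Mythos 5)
Your argument is correct, but note that the paper itself offers no proof of this theorem at all: it is quoted as a historical result from S{\l}upecki's thesis (reference [S{\l}]), so there is no in-paper argument to compare against. What you have written is essentially the classical, self-contained proof. For (i), your invariant is exactly the standard one: $\{0,1\}$ is a subuniverse for {\L} and $N$ (and for the projections), preservation of a subuniverse passes to all superpositions, and $T$ (the constant $\frac12$) violates it; this is precisely S{\l}upecki's witness for incompleteness. For (ii), your route through $x\vee y=\text{\L}(\text{\L}(x,y),y)=\max(x,y)$, the De Morgan dual $\wedge$, the constants $\text{\L}(x,x)=1$, $N(\text{\L}(x,x))=0$, $T=\frac12$, the three detectors $\delta_c$, and the disjunctive normal form is the standard completeness proof for finite-valued {\L}ukasiewicz--S{\l}upecki systems; all the small identities you invoke check out against the table (e.g.\ $\text{\L}(x,N(x))=\min(1,2-2x)$, so $\delta_1=N(\text{\L}(x,N(x)))$ indeed detects $1$). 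The only cosmetic slip is the phrase that ``the inner conjunction equals $1$'' for the matching tuple: with the constant $c_{(a_1,\dots,a_n)}$ included it equals that constant, i.e.\ $g(a_1,\dots,a_n)$, not $1$; read as the conjunction of the detectors alone the sentence is literally true, and either reading yields the stated conclusion, so this is not a gap. Relative to the paper, your contribution is a complete verification of a result the author only cites; it adds nothing the cited source does not contain, but it is sound and appropriately elementary.
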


(It is interesting to note that the same 1936--1937 issue of 
\emph{Comptes Rendus des S\'eances de la Soci\'et\'e des Sciences et des Lettres
de Varsovie} in which S{\l}upecki's paper was published, contains the paper 
\cite{webb-vars} of Donald L. Webb, a fresh PhD from Caltech and the author of 
the already mentioned \cite{webb}, in which he extends the results of the latter
paper. This, among other things, suggests that developments in this branch of logical calculi on both sides of Atlantic did not proceed in 
isolation at that time\footnote{
The personal contacts started, probably, with the visit of 
Willard Van Orman Quine (then at Harvard) to Warsaw in 1932. Ernest Nagel (then
at Columbia) made public (cf. \cite{nagel}) his interesting impressions of 
visiting Poland in 1935, but, written from the philosopher's rather than 
logician's standpoint (``the logical researches both at Warsaw and Lw\'ow are 
extraordinary specialized and technical''), these impressions, probably, 
contributed little to interchange of logical and mathematical ideas between the countries. Ulam was moving back and 
forth between US (Princeton and then Harvard) and Lvov in 1935--1939, but his 
interests, at least at that time, were outside logic. Webb's thesis advisor, 
Eric Temple Bell, an enthusiastic, albeit not always precise, writer of popular mathematical books,
praised {\L}ukasiewicz's $3$-valued logic as one of the four major contributions
``on the nature of truth'' during the last 6000 years 
(cf. \cite[pp.~258--262]{bell}), so it is, perhaps, not accidental that Webb,
a young Californian, published in a relatively obscure Warsaw journal.}). 
Earlier, another {\L}ukasiewicz student, Mordchaj Wajsberg has reported at the 
{\L}ukasiewicz--Tarski seminar\footnote{
``Professor Lukasiewicz's seminar at Warsaw was crowded with competent young 
men, incomparably better equipped in logic than students of like age in America,
who were expected to write as seminar exercises papers which elsewhere would be 
thought important enough for publication'', reports Nagel.
} 
some related results -- see a brief description at the very end (``Anmerkung'')
of \cite{wajsberg} (cf. also Surma \cite[\S ix]{surma}). In the same paper, more
binary maps generating all maps of arbitrary arity on a finite set are presented
without proof. Some of the Wajsberg's results apparently go back as early as 
1927, i.e. almost a decade before Webb.

More logical calculi, both complete and incomplete, were developed around this
time, first of all, by Emil Leon Post (born in Poland, otherwise not related to
that country\footnote{
A quote from \cite{wolenski}: ``When Tarski met Emil Post for the first time 
(in 1939 or 1940) he told him: 'You are the only logician who achieved something
important in propositional calculus without having anything to do with Poland'. 
Post answered: 'Oh, no, I was born in Bia{\l}ystok' ''.
}), as well as by {\L}ukasiewicz, S{\l}upecki, Boles{\l}aw Soboci\'nski 
(another PhD student of {\L}ukasiewicz), Eustachy \.{Z}yli\'nski (professor of
the Lvov University in 1919--1941), Zygmunt Zawirski, and others. Post's purely
formal treatment of many-valued logics (``a combinatorial scheme'' in the words
of Mostowski \cite[p.~3]{most}) is, perhaps, closer in spirit to mathematical
questions considered here than the philosophical attitude of {\L}ukasiewicz.

Another feature of the work of {\L}ukasiewicz's school and Polish logicians in 
general, was the constant quest for minimal, as far as possible, systems of 
axioms for that or another logical calculus. Minimality was understood both in 
terms of the number of axioms and their length (cf., e.g., 
\cite[pp.~390--391]{wolenski} and \cite[\S v]{surma})\footnote{
This fascination with minimal systems of logical axioms was not shared by 
everyone in Poland. An anti-utopian novel ``Nienasycenie'' by Stanis{\l}aw 
Ignacy Witkiewicz (a close friend of Leon Chwistek, professor of logic at the 
Lvov University, as well as of Alfred Tarski), written in 1927 and depicting 
conquest of Poland by enemy forces and establishment there of a totalitarian regime by the end of XX century, features 
a grotesque figure of logician Afanasol Benz (a Jew, 
stresses Witkiewicz) who invented a single axiom that nobody but him could 
understand, and from which all mathematics follows by a mere formal combination of symbols (cf. \cite[p.~93]{witkiewicz}).
}.
Banach's claim and question resonate well with this line of thought, as they can
be phrased as follows: if a certain $3$-term operation in an $n$-valued logic is
not generated by a $2$-term one (alas, as we have seen, this is wrong), the next
best thing to ask is to extend the $3$-term operation so that it will be 
generated that way.

\medskip

Of course, many further results about Hilbert's 13th problem, from one side, and
functional completeness and incompleteness of various logical calculi, from the
other, were obtained afterwards (cf., e.g., the survey by Vitushkin 
\cite{vitushkin} for the former, and the book by Lau \cite{lau} for the latter),
but their discussion will bring us far away from our main topic.

\section{Clones of analytic and ordered maps, counting superpositions, examples
to Banach's claim}

The ``generation'' in the results above is understood in the sense of the theory
of clones, i.e. when forming superposition of maps, repeated variables (and,
hence, superpositions of arbitrary length) are allowed. (Recall that clone
is a set of a multiary maps on a fixed set $X$ which contains projections 
on each variable, and is closed with respect to superposition; cf., e.g., 
\cite[\S 1.3]{lau}). Thus, Banach's claim can not be interpreted in terms of the
clone of all maps on the underlying set $X$, as Theorem \ref{th-webb-los} tells
that all the clone, including its ternary fragment, is generated by its binary fragment. It should be noted that 
if we assume that $X$ possess some additional -- topological, analytic, order, 
etc. -- structure, and consider not the clone of all maps, but the clone of maps
on $X$ preserving this structure -- then Theorem \ref{th-webb-los} about 
generation of all the maps by binary ones is no longer true. Some sporadic 
examples of various degree of sophistication:
\begin{enumerate}
\item
Not every real analytic function in $3$ 
variables can be represented as a superposition of analytic functions in two 
variables (cf. \cite[\S 3]{vitushkin}) -- a statement made already by Hilbert
in his original formulation of the 13th problem, whose proof utilizes some 
counting similar to elementary counting in the case of $X$ finite, and also
in the case of polynomials over a finite field in Example \ref{ex-gf},
see below.

\item
Superpositions of smooth functions in $2$ variables satisfy various differential
equations, not satisfied by all smooth functions in $3$ variables (cf. solution of Problem
119a, Part 2 of \cite{polya-szego} for the relevant calculations).

\item
Algebraic functions in sufficiently high number of variables can not be 
represented as superposition of algebraic functions in sufficiently low (in 
particular, $2$) number of variables -- a suite of deep results due to 
V.I. Arnold and his followers, obtained by interpreting cohomology classes of a
suitable braid group as obstructions to such representation (cf. Napolitano 
\cite{napolit} for references).

\item
There is an $8$-element poset whose clone of monotone maps cannot be generated 
not only by its binary fragment, but by any finite set of maps (due to 
G. Tardos, cf. \cite[\S 11.5]{lau}).
\end{enumerate}

However, as Banach is apparently interested in the case of $X$ finite, and does 
not impose on $X$ any additional structure, such interpretation seems to be unlikely.

If, however, we will understand the superposition in a more 
``operadic-like'', ``multilinear'' fashion, where each variable occurs only
once, the only possible ways to generate the ternary map (\ref{eq-f}) by a 
binary one $*: X \times X \to X$, are:
\advance\value{equation} by1
\begin{equation}\label{eq-L}\tag{\arabic{equation}L}
f(x,y,z) = (x*y)*z ,
\end{equation}
i.e. the same as in the version of the question from the {\L}o\'s paper 
\cite{los} discussed above, and
\begin{equation}\label{eq-R}\tag{\arabic{equation}R}
f(x,y,z) = x*(y*z) .
\end{equation}
This is also in line with Theorem \ref{th-tri}.

Of course, any map representable in the form (\ref{eq-L}) gives rise, via 
permutation of arguments, to a map representable in the form (\ref{eq-R}), and vice versa. Indeed, the equality (\ref{eq-R}) can be rewritten as 
\begin{equation}\label{eq-12}
f^{(13)}(x,y,z) = (x *^{(12)} y) *^{(12)} z ,
\end{equation}
where 
$$
f^\sigma(x_1, \dots, x_n) = f(x_{\sigma(1)}, \dots, x_{\sigma(n)}) ,
$$
and $\sigma$ is a permutation from $S_n$, the symmetric group in $n$ variables.
Banach is concerned with the case of ``commutative'' maps, which, by all 
accounts, are $n$-ary maps $f$ which are stable under any $\sigma \in S_n$: 
$f = f^\sigma$ (usually, such maps are called symmetric). As it follows from (\ref{eq-12}), a commutative map $f$ is representable in the 
form (\ref{eq-L}) if and only if it is representable in the form (\ref{eq-R}).

In this sense, Banach's claim becomes true. In the case of a finite set $X$,
this is obvious from an elementary counting: the number of binary maps is 
$|X|^{|X|^2}$, so the number of ternary maps of the form (\ref{eq-L}) and (\ref{eq-R}) is less than 
$2|X|^{|X|^2}$ (less, because these maps may coincide for different $*$'s), 
while the number of all ternary maps is $|X|^{|X|^3}$.

\medskip

As an aside note, it seems to be an interesting question to estimate more 
exactly the number of different maps of the form (\ref{eq-L}) and (\ref{eq-R})
on an $n$-element set. A computer count produces the following table, where the
second column contains the number of all binary maps on an $n$-element set, 
$T_L(n)$ denotes the number of ternary maps of the form (\ref{eq-L}),
$T_{LR}(n)$ denotes the number of ternary maps both of the form 
(\ref{eq-L}) and (\ref{eq-R}), and $T_{comm}(n)$ denotes the number of 
commutative ternary maps of the form (\ref{eq-L}) (which coincides with the
number of such maps both of the form (\ref{eq-L}) and (\ref{eq-R})) (so, for $n>1$ we have the obvious inequalities 
$T_{comm}(n) < T_L(n) < T_{LR}(n) < 2T_L(n)$):

\medskip

\begin{center}
\begin{tabular}{|l|r|r|r|r|}
\hline
$n$ & $n^{n^2}$     & $T_L(n)$ & $T_{LR}(n)$ & $T_{comm}(n)$ \\ \hline 
$1$ & $1$           & $1$      & $1$         & $1$           \\ \hline 
$2$ & $16$          & $14$     & $21$        & $5$           \\ \hline
$3$ & $19683$       & $19292$  & $38472$     & $48$          \\ \hline
\end{tabular}
\end{center}

\medskip

In the given range, $T_{comm}(n)$ turns out to be equal to $T_{comm}^{comm}(n)$,
the number of commutative ternary maps of the form (\ref{eq-L}) with 
\emph{commutative} $*$. We do not know the general formulas for $T_L(n)$, 
$T_{LR}(n)$, $T_{comm}(n)$, and $T_{comm}^{comm}(n)$, and refrain here from 
making any conjectures, but this seems to be an interesting topic to study 
further\footnote{
A simple Perl program, \texttt{binary-ternary.pl}, which computes these numbers
for small values of $n$, is available as an ancillary file in the arXiv version
of this note. At the time of this writing, the $3$-term sequences for $T_L(n)$ 
and $T_{LR}(n)$ are absent in the Online Encyclopedia of Integer Sequences.
}.

It is easy to manufacture examples of a ternary map $f$ which cannot be
generated, in the sense of (\ref{eq-L}), by any binary operation $*$, thus
explicitly confirming Banach's claim (in examples below, the ternary maps 
are commutative, but they are not representable as superposition, in the sense 
of (\ref{eq-L}), of any binary map, commutative or not).

\begin{example}
Let $X$ be a set containing more than $2$ elements, and $a,b\in X$ be two 
distinct elements of $X$. Define a ternary map $f$ on $X$ by
$$
f(x,y,z) = \begin{cases}
a, & \text{ if all $x,y,z$ are distinct from $a$}          \\
b, & \text{ if at least one of $x,y,z$ coincides with $a$} .
\end{cases}
$$
Suppose (\ref{eq-L}) holds for some binary map $*$ on $X$. If for any two 
elements $x,y\in X$, both distinct from $a$, $x*y \ne a$, then for any $3$
elements $x,y,z\in X$, each distinct from $a$, we have 
$a = f(x,y,z) = (x*y)*z \ne a$, a contradiction. Hence there are $u,v\in X$,
both distinct from $a$, such that $u*v=a$. Then $a*a = (u*v)*a = f(u,v,a) = b$,
and then for any $x \in X$, $b*x = (a*a)*x = f(a,a,x) = b$, and, finally,
$a = f(b,u,v) = (b*u)*v = b*v = b$, a contradiction.
\end{example}

\begin{example}\label{ex-gf}
Let $X$ be a set of $q = p^n$ elements, where $p$ is a prime, and $f$ a ternary
map on $X$. Endow $X$ with the structure of the finite field $\mathsf{GF}(q)$. 
The question is whether there exists or not a binary map $g$ on $\mathsf{GF}(q)$
such that
\begin{equation}\label{eq-fg}
f(x,y,z) = g(g(x,y),z)
\end{equation} 
for any $x,y,z\in \mathsf{GF}(q)$. Since, due to Lagrange interpolation formula,
each $k$-ary function on $\mathsf{GF}(q)$ can be represented as a polynomial in
$k$ variables with coefficients in $\mathsf{GF}(q)$, and the degree $<q$ in each variable (cf., e.g., 
Lidl--Niederreiter \cite[pp.~368--369]{lidl-nider}), and, moreover, two such polynomial maps are
equal if and only if their coefficients are equal, the condition (\ref{eq-fg})
can be rewritten as a system of polynomial equations in (polynomial) coefficients
of $g$. Suitably choosing $f$, one can obtain a system not having solutions in
$\mathsf{GF}(q)$ (in fact, ``most'' of the $f$'s will do, as the system consists
of $q^3$ equations in $q^2$ unknowns). For example, defining a ternary map
$f: \mathsf{GF}(2) \times \mathsf{GF}(2) \times \mathsf{GF}(2) \to 
\mathsf{GF}(2)$ by $f(x,y,z) = xy+xz+yz$, and writing 
$g(x,y) = a + bx + cy + dxy$ for some $a,b,c,d\in \mathsf{GF}(2)$, we arrive
at the system
$$
a + ab = 0, \quad
b^2    = 0, \quad
bc     = 0, \quad
c + ad = 0,  \quad
bd     = 1,  \quad
cd     = 1,  \quad
d^2    = 0
$$
which, evidently, does not have solutions.
\end{example}

Of course, nothing in these examples is specific to $3$ variables, and they can
be easily extended to $n$-ary maps for arbitrary $n$, and, moreover, to 
non-representability in the form $(x * y) \circ z$ for two binary maps $*$ and 
$\circ$, and similar $n$-ary expressions.

\section{Answer to Banach's question}

The following answers the question of Banach interpreted as above -- i.e. about
extensions of arbitrary ternary maps to those having the form (\ref{eq-L}) -- in affirmative. (We deal with arbitrary, not necessary commutative, maps). 

\begin{mytheorem}\label{th-1}
For any (finite) set $X$ and a ternary map $f: X \times X \times X \to X$, there
is a (finite) set $Y \supset X$ and a binary map $*: Y \times Y \to Y$ such 
that 
$$
f(x,y,z) = (x*y)*z
$$
for any $x,y,z \in X$. 
\end{mytheorem}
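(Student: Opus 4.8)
The plan is to realize the given ternary map $f$ inside a binary structure by adding enough new elements to record "partial products".

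First I would try the most naive construction: let $Y = X \sqcup (X \times X)$, thinking of the new element $(x,y)$ as a formal symbol that will play the role of $x*y$. Then set $x*y := (x,y)$ for $x,y \in X$, and $(x,y)*z := f(x,y,z)$ for $x,y,z \in X$, so that indeed $(x*y)*z = (x,y)*z = f(x,y,z)$ for all $x,y,z \in X$, as required. The only thing left is to define $*$ on the remaining pairs of arguments — namely on $X \times (X\times X)$, on $(X\times X)\times(X\times X)$, and on $(X\times X)\times X$ beyond what is already forced — but the point of Banach's question, read as a pure extension problem, is that there is no constraint whatsoever on these remaining values: we may define them arbitrarily (say, send every unspecified pair to some fixed basepoint $x_0 \in X$). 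Thus $*$ is a well-defined total binary operation on the finite set $Y = X \sqcup (X\times X)$, and the displayed identity holds on $X$.

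The one place to be careful is consistency: I must check that the prescriptions "$x*y = (x,y)$" and "$(x,y)*z = f(x,y,z)$" never collide. Since $X$ and $X \times X$ are disjoint by construction, the pair $(x,y)$ used as an \emph{input} in the second rule is a genuine new element and never an element of $X$, so the two rules are defined on disjoint parts of $Y \times Y$ and cannot conflict; likewise the "garbage" assignments are made only on pairs not touched by either rule. Hence no well-definedness issue arises at all, and $|Y| = |X| + |X|^2$ is finite whenever $X$ is.

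I do not expect any serious obstacle here; the subtlety is entirely conceptual rather than technical — one has to recognize that in this reading of the question associativity (or any other law) is \emph{not} demanded of $*$, so the freedom to leave most products undetermined makes the construction trivial once the bookkeeping set $X \sqcup (X\times X)$ is chosen. (If one instead wanted $(Y,*)$ to be a semigroup, as in the {\L}o\'s/Monk--Sioson version, the argument would be genuinely harder and would require either the model-theoretic embedding theorem or an explicit free-semigroup style construction; but that is a strictly stronger statement than what Theorem \ref{th-1} asks.) It may also be worth remarking, as a sanity check, that the same construction proves the analogous statement for $n$-ary maps and the expression $(\cdots((x_1 * x_2)*x_3)\cdots)*x_n$, by iterating the "formal partial product" device.
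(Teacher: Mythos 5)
Your construction is exactly the one given in the paper (credited there to the referee): take $Y = X \sqcup (X\times X)$, set $x*y=(x,y)$ and $(x,y)*z=f(x,y,z)$, and define the remaining products arbitrarily. The proof is correct and essentially identical to the paper's, including the observation that no constraint such as associativity is imposed, so the leftover values are free.
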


\begin{proof}[Proof {\rm (Referee)}]
Define $Y$ as a disjoint union $X \cup (X \times X)$, and the binary map $*$ on it as follows:
\begin{align*}
x * y     &= (x,y)      \\ 
(x,y) * z &= f(x,y,z) ,
\end{align*}
where $x,y,z \in X$. The rest, i.e. $x * (y,z)$ and $(x,y) * (z,t)$, is defined
in an arbitrary way.
\end{proof}

This elementary construction has a drawback that it cannot be easily modified to
obtain similar statements in classes of maps satisfying various conditions. For 
example, the so constructed binary map $*$ is, generally, neither commutative, 
nor associative, even if the initial ternary map $f$ is. To remedy this, one may
adopt just a slightly more complicated approach, based on various, related, 
constructions of envelopes of Lie and other triple systems, an idea going back 
to the pioneering paper \cite{jacobson} of Nathan Jacobson (born in Warsaw, but
otherwise not related to Poland) published only a few years later than the 
question was posed. In this way, we get an alternative proof of a positive 
answer to Banach's question in the narrower -- ``associative'' -- version, given
in the papers \cite{los} and \cite{monk-sioson} as mentioned at the beginning of
\S \ref{sec-los}. Our proof differs from both of them and, as we hope, is shorter and simpler.

\begin{mytheorem}[{\L}o\'s, Monk--Sioson]
For any (finite) commutative ternary semigroup $(X,f)$ there is a (finite)
commutative binary semigroup $(Y,*)$ such that $Y \supset X$ and 
$$
f(x,y,z) = (x*y)*z
$$
for any $x,y,z \in X$. 
\end{mytheorem}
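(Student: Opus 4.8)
The plan is to mimic the referee's construction for Theorem~A, but to be more careful so that the resulting binary operation is both commutative and associative whenever $f$ is a commutative ternary semigroup operation. The naive disjoint union $X \cup (X\times X)$ fails because there is no natural way to multiply two pairs, and commutativity forces us to identify $x*y$ with $y*x$. So the first step is to replace $X\times X$ by the set of \emph{unordered} pairs (multisets of size $2$) from $X$, and to build, following Jacobson's envelope idea, a single associative multiplication on $Y = X \sqcup \binom{X}{\le 2}$ together with, possibly, an extra absorbing element. The key identities we must enforce are $x*y = \{x,y\}$ and $\{x,y\}*z = f(x,y,z)$ for $x,y,z\in X$; commutativity of $f$ guarantees these are well defined on unordered pairs.

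The main work is associativity. Once $x*y$ and $\{x,y\}*z$ are pinned down, the products $x*\{y,z\}$, $\{x,y\}*\{z,t\}$ and products involving longer ``words'' are no longer free: associativity of the putative $*$ on $X$-elements reads $(x*y)*z = x*(y*z)$, i.e. $f(x,y,z) = x*\{y,z\}$, so $x*\{y,z\}$ is also forced to equal $f(x,y,z)$ — and this is consistent with $\{x,y\}*z = f(x,y,z)$ precisely because $f$ is symmetric. The cleanest way to get associativity without chasing an ever-growing tower of cases is the standard trick: let $Y$ be the quotient of the free commutative semigroup on the set $X$ by the congruence generated by all relations of the form $xyz \sim f(x,y,z)$ (a word of length $\ge 3$ collapses using ternary associativity of $f$, which is exactly the hypothesis needed to make the relation well behaved). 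Then $Y$ consists of $X$ itself (length-one words), unordered pairs $\{x,y\}$ (length-two words), and a single extra class (all words of length $\ge 3$ in which any two elements can be rewritten), so $Y$ is finite of size $|X| + \binom{|X|+1}{2} + 1$; the operation is associative and commutative by construction; and $(x*y)*z$ is the class of $xyz$, which is $f(x,y,z)\in X$.

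I would then verify: (i) $X$ embeds in $Y$, i.e. distinct elements of $X$ remain distinct in the quotient — this needs a short argument that no relation $xyz\sim f(x,y,z)$ can be used to identify two length-one words, which follows because the generating relations only equate words of length $\ge 3$ with words of length $1$, and a congruence closure of such relations cannot merge two generators (one can make this precise by exhibiting the explicit normal form: every element has a representative of length $\le 2$, plus the single ``big'' class, and length-$1$ representatives are unique); (ii) the identities $x*y=\{x,y\}$ and $(x*y)*z = f(x,y,z)$ hold; (iii) finiteness. The main obstacle is exactly step~(i) — showing the embedding is injective and that the quotient does not unexpectedly collapse, which is where the $n$-ary associativity of $f$ is genuinely used. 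Everything else is formal once the free-commutative-semigroup-modulo-congruence viewpoint is adopted, and this is the point at which the proof becomes shorter than those of \cite{los} and \cite{monk-sioson}: one does not construct the envelope by hand but simply reads it off as the obvious quotient, with ternary associativity of $f$ ensuring the congruence is ``shallow'' (kills everything of length $\ge 3$ down to length $\le 2$ or a single absorbing class).
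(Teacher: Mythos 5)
Your route differs from the paper's: instead of building the enveloping semigroup concretely from the translation maps $m_{x,y}\colon z\mapsto f(x,y,z)$ inside the full transformation semigroup of $X$ (where associativity is a finite case check and injectivity of $X\hookrightarrow Y$ is trivial because $X$ and the set of maps are disjoint by construction), you present $Y$ as the quotient of the free commutative semigroup on $X$ by the congruence generated by $xyz\sim f(x,y,z)$. That approach can be made to work, and it buys commutativity and associativity for free; but it trades them for exactly the point you yourself flag as the crux, and there your argument has a genuine gap. The assertion that ``a congruence closure of such relations cannot merge two generators'' is not a valid principle: a congruence generated by relations equating length-$3$ words with generators can perfectly well identify two generators through a chain of longer words (e.g.\ if $xyz$ and $x'y'z'$ become congruent for contextual reasons, then $f(x,y,z)\sim f(x',y',z')$ follows). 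Ruling this out is precisely where the ternary associativity and commutativity of $f$ must be used, and you never use them for that purpose. The missing ingredient is a congruence-invariant evaluation: one must prove generalized associativity for commutative ternary semigroups (every odd-length multiset over $X$ has a well-defined value in $X$, independent of the order and bracketing of the reductions and of the choice of the triple reduced at each step), observe that parity of word length is invariant under the generating relations, and conclude that any chain connecting two singletons passes only through odd-length words on which the evaluation is constant; injectivity of $X\to Y$ then follows. Without this argument the proof is incomplete.

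A secondary, related error: your structural description of the quotient is wrong. Words of length $\ge 3$ do not form ``a single extra class''; each such word is congruent to a word of length $1$ or $2$ (the generating relations shorten length by $2$), so there is no extra absorbing element, and moreover distinct pair classes can be identified (e.g.\ $\{f(x,y,z),u\}\sim\{x,f(y,z,u)\}$ via the length-$4$ word $xyzu$). Hence the claimed size $|X|+\binom{|X|+1}{2}+1$ is incorrect; what is true, and all you need for finiteness, is that every class has a representative of length at most $2$, giving $|Y|\le |X|+\binom{|X|+1}{2}$. The identities $x*y=\{x,y\}$ and $(x*y)*z=f(x,y,z)$ and finiteness are indeed formal, as you say; the theorem stands or falls with the non-collapse argument sketched above, which you should write out (or else switch to the paper's transformation-semigroup envelope, where this difficulty does not arise).
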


\begin{proof}

Let $M$ be the set consisting of maps $X \to X$ of the form 
$$
m_{x,y}: z \mapsto f(x,y,z)
$$ 
for all $x,y\in X$, and $\mathcal M$ the subsemigroup of the semigroup of all 
maps $X \to X$ with respect to composition, generated by $M$. Note that 
commutativity and associativity of $f$ imply 
$$
m_{x,y} \circ m_{u,v} = m_{u,v} \circ m_{x,y}
$$
for any $x,y,u,v \in X$, so $\mathcal M$ is a commutative semigroup.

Define $Y$ as a disjoint union $X \cup \mathcal M$, and the binary map $*$ on it
as follows: 
\begin{align*}
x * y &= m_{x,y} \\
x * g &= g * x = g(x) \\
g * h &= g \circ h
\end{align*}
for $x,y\in X$, $g,h\in \mathcal M$ ($\circ$ denotes the composition of maps).
As $f$ is commutative, $m_{x,y} = m_{y,x}$, and hence $*$ is commutative.  

For any $x,y,z\in X$, we have 
$$
(x * y) * z = m_{x,y}(z) = f(x,y,z) .
$$ 

It remains to check the associativity of $*$. For $3$ terms which all belong to
$X$, the associativity of $*$ follows from the commutativity of $f$. Similarly, for 3 terms which all belong to $\mathcal M$, the associativity of 
$*$ follows from the associativity of $\circ$. If two terms, say, $g$ and $h$, 
belong to $\mathcal M$, and one, say $x$, belongs to $X$, the associativity of 
$*$ follows from the commutativity of $\circ$ in $\mathcal M$:
\begin{gather*}
(g*h)*x = (g \circ h)*x = g(h(x)) = g*(h(x)) = g*(h*x) ,  \\
(g*x)*h = g(x)*h = h(g(x)) = g(h(x)) = g*(h(x)) = g*(x*h) .
\end{gather*}

In the remaining cases, where one term, $g=m_{u,v}$ ($u,v\in X$), belongs to 
$\mathcal M$, and two terms, $x$ and $y$, belong to $X$, assuming additionally 
$z\in X$, and utilizing commutativity and associativity of $f$, we have:
\begin{multline*}
((x*y)*g)(z) = (m_{x,y}*g)(z) = (m_{x,y} \circ g)(z) = f(x,y,g(z)) =
f(x,y,f(u,v,z)) \\ = f(x,f(u,v,y),z) = f(x,g(y),z) = m_{x,g(y)}(z) = 
(x*g(y))(z) = (x*(y*g))(z) 
\end{multline*}
and
\begin{multline*}
((x*g)*y)(z) = (g(x)*y)(z) = m_{g(x),y}(z) = f(g(x),y,z) = f(f(u,v,x),y,z) 
\\= f(x,f(u,v,y),z) \overset{\text{as above}}{=} (x*(g*y))(z) ,
\end{multline*}
what completes the proof.
\end{proof}

One can deal similarly with not necessary commutative ternary semigroups (one 
needs then to consider, instead of $m_{x,y}$, both ``left'' and ``right'' 
multiplications), with ternary groups (thus recovering a part of Post's results \cite{post}), etc.

Some of the statements of this section may be suitably extended, via a 
straightforward iterative procedure, to the maps $f$ of arbitrary arity, but we 
will not venture into this.

\section*{Acknowledgements}

Thanks are due to Martin Goldstern, Kateryna Pavlyk, Jan Wole\'nski, and 
especially an anonymous referee for useful remarks which improved the text. The 
financial support of FAPESP (grant 13/12050-2), of the Regional Authority of 
the Moravian-Silesian Region (grant MSK 44/3316), and of the Ministry of Education and Science of the Republic of Kazakhstan 
(grant 0828/GF4) is gratefully acknowledged.

\end{document}